\newtheorem{thm}{Theorem}[section]
\newtheorem{corr}[thm]{Corollary}
\newtheorem{lemma}[thm]{Lemma}
\newtheorem{prop}[thm]{Proposition}
\theoremstyle{definition}
\newtheorem{defn}[thm]{Definition}
\newtheorem{ex}[thm]{Example}
\newtheorem{rem}[thm]{Remark}
\newtheorem*{ack}{Acknowledgments}
\numberwithin{equation}{section}
\newcommand{\RNum}[1]{\uppercase\expandafter{\romannumeral #1\relax}}
\def\pt{\partial}
\def\del{\nabla}
\def\G2{\GG_2}
\def\g2{\varphi}
\def\lieX{\mathcal{L}_X}
\DeclareMathOperator\GG{G}
\DeclareMathOperator\Div{div}
\DeclareMathOperator\Vol{Vol}
\DeclareMathOperator\tRic{\text{Ric}}
\def\@tocline#1#2#3#4#5#6#7{\relax
  \ifnum #1>\c@tocdepth % then omit
  \else
    \par \addpenalty\@secpenalty\addvspace{#2}%
    \begingroup \hyphenpenalty\@M
    \@ifempty{#4}{%
      \@tempdima\csname r@tocindent\number#1\endcsname\relax
    }{%
      \@tempdima#4\relax
    }%
    \parindent\z@ \leftskip#3\relax \advance\leftskip\@tempdima\relax
    \rightskip\@pnumwidth plus4em \parfillskip-\@pnumwidth
    #5\leavevmode\hskip-\@tempdima
      \ifcase #1
       \or\or \hskip 1.8em \or \hskip 4.7em \else \hskip 3em \fi%
      #6\nobreak\relax
    \hfill\hbox to\@pnumwidth{\@tocpagenum{#7}}\par %<---- \dotfill -> \hfill
    \nobreak
    \endgroup
  \fi}
\begin{document}
\title[Some results on RB solitons and almost solitons]{Some results on Ricci-Bourguignon solitons and almost solitons}
\author{Shubham Dwivedi}
\address{Department of Pure Mathematics, University of Waterloo, Waterloo, ON N2L3G1}
\email{\href{mailto:s2dwived@uwaterloo.ca}{s2dwived@uwaterloo.ca}}
\date{\today}
%\urladdr{\href{http://www.math.uwaterloo.ca/~s2dwived/}{www.math.uwaterloo.ca/~s2dwived}}

%
\begin{abstract}
We prove some results for the solitons of the Ricci-Bourguignon flow, generalizing the corresponding results for Ricci solitons. Taking motivation from Ricci almost solitons, we then introduce the notion of Ricci-Bourguignon \emph{almost} solitons and prove some results about them which generalize previous results for Ricci almost solitons. We also derive integral formulas for compact gradient Ricci-Bourguignon solitons and compact gradient Ricci-Bourguignon almost solitons. Finally, using the integral formula,  we show that a compact  gradient Ricci-Bourguignon almost soliton is isometric to a Euclidean sphere if it has constant scalar curvature or its associated vector field is conformal. 
\end{abstract}

\maketitle

2020 Mathematics Subject Classification : 53C20; 53C21; 53E20.

\medskip

%\tableofcontents{}

\section{Introduction}\label{sec-intro}

Ricci solitons play a major role in Ricci flow where they correspond to self-similar solutions of the flow. Thus, given a geometric flow, it is natural to study the solitons associated to that flow. A family of metrics $g(t)$ on an $n$-dimensional Riemannian manifold $(M^n,g)$ is said to evolve by the \emph{Ricci-Bourguignon} flow (RB flow for short) if $g(t)$ satisfies the following evolution equation

\begin{align}\label{RBfloweqn}
\frac{\pt g}{\pt t}=-2(\text{Ric}-\rho Rg),
\end{align}
where $\text{Ric}$ is the Ricci tensor of the metric, $R$ is the scalar curvature and $\rho \in \mathbb{R}$ is a constant. The flow in equation \eqref{RBfloweqn} was first introduced by Jean-Pierre Bourguignon \cite{bourguignon}, building on some unpublished work of Lichnerowicz and a paper of Aubin \cite{aubin}. We note that \eqref{RBfloweqn} is precisely the Ricci flow for $\rho=0$. In particular, the right hand side of the evolution equation \eqref{RBfloweqn} is of special interest for different values of $\rho$, for example
\begin{itemize}
\item $\rho=\frac 12$, the Einstein tensor $\text{Ric}-\frac{R}{2}g$.
\item $\rho=\frac 1n$, the traceless Ricci tensor $\tRic-\frac{R}{n}g$.
\item $\rho=\frac{1}{2(n-1)}$, the Schouten tensor $\tRic-\frac{R}{2(n-1)}g$.
\item $\rho =0$, the Ricci tensor $\tRic$.
\end{itemize} 

\medskip

A systematic study of the parabolic theory of the RB flow was initiated in \cite{catino1}. In that paper, the authors proved, along with many other results, the short time existence of the flow \eqref{RBfloweqn} on any closed $n$-dimensional manifold starting with an arbitrary initial metric $g_0$ for $\rho < \frac{1}{2(n-1)}$. Just like the Ricci flow case, we make the following  

\begin{defn} 
A Ricci-Bourguignon soliton (RB soliton for short) is a Riemannian manifold $(M^n,g)$ endowed with a vector field $X$ on $M$ that satisfies
\begin{align}\label{RBsolitoneqn}
R_{ij}+\frac 12(\mathcal{L}_Xg)_{ij}&=\lambda g_{ij}+\rho Rg_{ij},
\end{align}
where $\lieX g$ denotes the Lie derivative of the metric $g$ with respect to the vector field $X$ and $\lambda \in \mathbb{R}$ is a constant.
\end{defn}

 When $X=\nabla f$ for some smooth $f:M\rightarrow \mathbb{R}$, then $(M,g)$ is called a \emph{gradient} RB soliton. The soliton is called 
\begin{enumerate}
\item expanding when $\lambda<0$,
\item steady when $\lambda =0$,
\item shrinking when $\lambda >0$.
\end{enumerate}

\medskip

RB solitons correspond to self-similar solutions of the RB flow. An RB soliton is called \emph{trivial} if $X$ is a Killing vector field, i.e., $\lieX g=0$. We remark that even though the short time existence result for the flow \eqref{RBfloweqn} is for $\rho < \frac{1}{2(n-1)}$, any value of $\rho$ is possible for the considerations of self-similar solutions of the flow.

\medskip

Gradient RB solitons were studied in detail, for example in \cite{catino2} and \cite{catino3}, where the authors called them \emph{gradient} $\rho$-\emph{Einstein solitons}. Various classification and rigidity results about gradient RB solitons were proved in those papers and we refer the reader to those papers for precise statements and proofs of the results.

\medskip

The notion of Ricci \emph{almost} solitons was introduced in \cite{pigola}, where the authors modified the definition of a Ricci soliton by considering the parameter $\lambda$ in the definition of a Ricci soliton to be a \emph{function} rather than a constant. 
 Motivated by the Ricci flow case, we make the following
 
 \begin{defn}
 A Riemannian manifold $(M^n,g)$ is a Ricci-Bourguignon \emph{almost} soliton (RB almost soliton for short) if there is a vector field $X$ and a soliton function $\lambda :M\rightarrow \mathbb{R}$ satisfying
 \begin{align}\label{almostRBeqn}
 \tRic + \frac 12 \lieX g = \lambda g + \rho Rg.
\end{align}  
 \end{defn}
 An RB almost soliton is called a \emph{gradient} RB almost soliton if $X=\del f$ for some smooth function $f$ on $M$ and is expanding, steady or shrinking if $\lambda <0,\ \lambda =0$ or $\lambda >0$, respectively. We note that if $X$ is a Killing vector field, then a RB almost soliton is just a RB soliton as it forces $\lambda$ to be a constant. 
 
 Recall that a vector field $Y$ on a Riemannian manifold $(M,g)$ is called a conformal vector field if there exists a function $\psi:M\rightarrow \mathbb{R}$ such that 
 \begin{align*}
 \mathcal{L}_Y g & = 2\psi g   .
\end{align*}  
The function $\psi$ is sometimes referred as the potential of the vector field $Y$. The conformal vector field is nontrivial if $\psi \neq 0$. If $\psi=0$, then $Y$ is called a Killing vector field. 
 \medskip
 
Some characterization results for compact Ricci and Ricci almost solitons were obtained in \cite{barros2} and \cite{barros1} respectively. The goal of the present paper is to generalize the results obtained in those papers to RB and RB almost solitons. More precisely, in \cref{proofs} we prove the following theorems.

\medskip

\begin{thm}\label{mainthm1}
Let $(M^n,g,X,\lambda, \rho)$, $n\geq 3$, be a RB soliton and suppose that the vector field $X$ is a conformal vector field.
\begin{enumerate}
\item If $M$ is compact, then $X$ is a Killing vector field and hence $(M^n,g,X,\lambda, \rho)$ is a trivial RB soliton.
%\item If $M$ is complete noncompact and gradient RB soliton, then either $X$ is a Killing vector field or $(M^n,g,X,\lambda, \rho)$ is isometric to the Euclidean space.
\item There is no nontrivial, complete noncompact RB soliton $(M^n, g, X, \lambda, \rho)$ with conformal vector field $X$.
\end{enumerate}
\end{thm}

\noindent
This generalizes Theorem 3 in \cite{barros2} and characterizes compact RB solitons when $X$ is a conformal vector field. The following corollary gives a lower bound for the first eigenvalue of the Laplacian on a compact RB soliton when $X$ is a  conformal vector field and generalizes Theorem 4 in \cite{barros2}.

\medskip

\begin{corr}\label{mainthm1corr}
Let $(M^n,g,X,\lambda, \rho)$ be a compact RB soliton with $X$ a conformal vector field. If $n\geq 3$ and $\lambda+\rho R >0$, then the first eigenvalue $\lambda_1$ of the Laplacian satisfies $\lambda_1 \geq (\lambda +\rho R) \frac{n}{n-1}$. Moreover, equality occurs if and only if $M^n$ is isometric to a Euclidean  sphere.
\end{corr}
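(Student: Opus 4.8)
The plan is to reduce the statement to the classical Lichnerowicz--Obata theorem by first showing that the hypotheses force $(M^n,g)$ to be an Einstein manifold with positive constant scalar curvature. Since $M$ is compact and $X$ is conformal, part (1) of \cref{mainthm1} applies immediately and tells us that $X$ is a Killing field, so $\lieX g = 0$. Substituting this into the soliton equation \eqref{RBsolitoneqn} collapses it to
\begin{align*}
R_{ij} = (\lambda + \rho R)\, g_{ij},
\end{align*}
so that the Ricci tensor is a pointwise multiple of the metric.

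The next step is to upgrade this to a genuine Einstein condition, i.e. to show that $\lambda + \rho R$ is constant. I would take the divergence of the displayed identity and invoke the contracted second Bianchi identity $\del^i R_{ij} = \tfrac12\del_j R$. Writing $\phi := \lambda + \rho R$ (note that $\lambda$ is a genuine constant here, since we are dealing with an RB soliton and not an almost soliton), one obtains $\del_j \phi = \tfrac12\del_j R = \tfrac12\del_j(n\phi)$, whence $(n-2)\del_j\phi = 0$. As $n \geq 3$ this is precisely Schur's lemma: $\phi$, and therefore $R$, is constant. Thus $\tRic = (\lambda + \rho R)g$ with $\lambda + \rho R > 0$ a positive constant.

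With this in hand I would set $k := \frac{\lambda + \rho R}{n-1} > 0$ so that $\tRic = (n-1)k\,g$, and apply Lichnerowicz's estimate: on a compact manifold with $\tRic \geq (n-1)k\,g$ one has $\lambda_1 \geq nk = (\lambda + \rho R)\tfrac{n}{n-1}$, which is exactly the claimed bound. The equality case is then precisely Obata's rigidity theorem, which asserts that $\lambda_1 = nk$ forces $(M^n,g)$ to be isometric to the round sphere of the corresponding radius. I do not anticipate a genuine obstacle in this argument; the only point requiring care is the passage from the pointwise identity $R_{ij} = (\lambda + \rho R)g_{ij}$ to constant scalar curvature, which is exactly where the dimension hypothesis $n \geq 3$ is used through Schur's lemma, and without which $\lambda + \rho R$ could a priori vary and the stated bound would be ill-defined.
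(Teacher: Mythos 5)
Your proof is correct and follows essentially the same route as the paper: reduce to the Einstein case via Theorem~\ref{mainthm1}, then apply Lichnerowicz's eigenvalue estimate and Obata's rigidity theorem. The only cosmetic difference is that you establish constancy of $\lambda+\rho R$ by Schur's lemma after the Killing reduction, whereas the paper already has $R$ constant from Remark~\ref{rem3.2} --- which is the same divergence/contracted-Bianchi computation, just carried out before the reduction.
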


\medskip
The next theorem characterizes compact RB almost solitons with $X$ a conformal vector field and generalizes Theorem 2 in \cite{barros1}.

\medskip

\begin{thm}\label{mainthm2}
Let $(M^n,g,X,\lambda, \rho)$, $n\geq 3$, be a compact RB almost soliton. If $X$ is a nontrivial conformal vector field, then $M^n$ is isometric to a Euclidean  sphere.
\end{thm}

\medskip

\begin{ex}
Consider the Euclidean sphere $(S^n, g_{\textup{round}}, X, \lambda)$, where $g_{\textup{round}}$ is the round metric on $S^n$, $X$ is the projection of a non-zero constant vector field $\bar{X}$ on $\mathbb{R}^{n+1}$, over $S^n$ and $\lambda = 1-\rho(n-1) \Div X$, with $\rho\in \mathbb{R}$ a constant. Then $X$ is a conformal vector field on $S^n$ and is not a Killing vector field. Since $\tRic = (n-1)g_{\textup{round}}$ for $S^n$, $(S^n, g_{\textup{round}}, X, \lambda)$ is a RB almost soliton.
\end{ex}

\begin{ex}
Let $M=I\times_h \Sigma$ be the $h$-warped product of the real interval $I\subset \mathbb{R}$ with $0\in I$, and the Riemannian manifold $(\Sigma, g_{\Sigma})$ with dim $\Sigma=n$. That is, $M$ is an $(n+1)$-dimensional product manifold $I\times \Sigma$ with the metric
\begin{align*}
g_{M}=dt^2+h^2g_{\Sigma},
\end{align*} 
where $t$ is a global parameter on $I$ and $h$ is a positive function on $I$. Suppose $\Sigma$ is an Einstein manifold with $\tRic_{\Sigma} = -(n-1)a$ with $a<0$. Following \cite[Example 2.5]{pigola}, define a function $f(x,t)=f(t)$ by
\begin{align}\label{ex1}
f(t)=B+\int_{0}^t h(s) \left[ A+(n-1)\int_{0}^s \frac{h''h-(h')^2-a}{h^3} dx   \right ]ds
\end{align}
for some constants $A, B\in \mathbb{R}$. Also define, $\lambda(x,t)=\lambda(t)$ by
\begin{align}\label{ex2}
\lambda(t) = -(n-1)\left[ \frac{(h')^2+a}{h^2} -an\rho   \right] -\frac{h''}{h}+h' \left[ A+(n-1)\int_{0}^s \frac{h''h-(h')^2-a}{h^3} dx   \right ],
\end{align}
where $\rho \in \mathbb{R}$ is a constant. It follows from \cite{pigola} that $\left(M, g_{M},  f'\frac{\partial}{\partial t}, \lambda, \rho \right)$ is a RB almost soliton, where $f$ is given by \eqref{ex1} and $\lambda$ is given by \eqref{ex2}.
\end{ex}

\medskip

The next theorem generalizes Theorem 3 in \cite{barros1} obtained for compact Ricci almost solitons, which is the case when $\rho=0$.

\medskip

\begin{thm}\label{mainthm3}
Let $(M^n,g,X,\lambda, \rho)$, $n\geq 3$, be a compact RB almost soliton. If $\rho \neq \frac {1}{n}$ and 
\begin{align}\label{mainthm3eqn}
\int_M [\tRic(X,X)+\frac{n\rho}{n\rho-1}\del_X \Div X-2\rho g(\del R, X) -\frac{(n(2\rho+1)-2)}{n\rho-1}g(\del \lambda, X)]dv \leq 0,
\end{align}
then $X$ is a Killing vector field and $M^n$ is a trivial RB soliton.
\end{thm}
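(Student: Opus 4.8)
The plan is to show that, despite its appearance, the integrand in \eqref{mainthm3eqn} integrates to exactly $\int_M|\del X|^2\,dv$. Since this quantity is pointwise nonnegative, the hypothesis that it be $\le 0$ forces $\del X\equiv 0$, so that $X$ is parallel, hence Killing, and $M^n$ is a trivial RB soliton. Writing $S=\tfrac12\lieX g$, equation \eqref{almostRBeqn} reads $\tRic+S=(\lambda+\rho R)g$, and tracing it gives $\Div X=n\lambda+(n\rho-1)R$; differentiating yields $\del(\Div X)=n\,\del\lambda+(n\rho-1)\del R$, the relation that makes all the rational coefficients meaningful precisely because $\rho\neq\frac1n$.

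The two analytic ingredients I would assemble are a Weitzenb\"ock formula and its integrated Bochner consequence. Taking the divergence of \eqref{almostRBeqn}, using the contracted second Bianchi identity $\del^i R_{ij}=\tfrac12\del_j R$ on the right and the commutation $\del^i\del_j X_i=\del_j(\Div X)+\tRic(X)_j$ on the left, one obtains
\begin{equation}\label{plan-weitz}
\Delta X_j=-\del_j(\Div X)-\tRic(X)_j+2\del_j\lambda+(2\rho-1)\del_j R,
\end{equation}
with $\Delta$ the rough (connection) Laplacian on $X$. Contracting \eqref{plan-weitz} with $X$, integrating over the closed manifold, and invoking the Bochner identity $\tfrac12\Delta|X|^2=|\del X|^2+g(\Delta X,X)$ together with $\int_M\Delta|X|^2\,dv=0$ and $\int_M\del_X(\Div X)\,dv=-\int_M(\Div X)^2\,dv$, I would reach the integral formula
\begin{equation}\label{plan-int}
\int_M|\del X|^2\,dv=\int_M\Big[\tRic(X,X)-(\Div X)^2-2\del_X\lambda-(2\rho-1)\del_X R\Big]\,dv.
\end{equation}

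With \eqref{plan-int} available the argument becomes bookkeeping. Substituting $\del(\Div X)=n\,\del\lambda+(n\rho-1)\del R$ into the term $\frac{n\rho}{n\rho-1}\del_X\Div X$ of \eqref{mainthm3eqn}, the integrand simplifies to $\tRic(X,X)+(n-2)\rho\,\del_X R+(n-2)\del_X\lambda$. Using \eqref{plan-int} to replace $\int_M\tRic(X,X)\,dv$ and then collecting terms, the $\del_X\lambda$ and $\del_X R$ integrals combine with coefficients $n$ and $n\rho-1$ respectively, so that $\int_M[n\,\del_X\lambda+(n\rho-1)\del_X R]\,dv=\int_M\del_X(\Div X)\,dv=-\int_M(\Div X)^2\,dv$; this cancels the $(\Div X)^2$ term and leaves precisely $\int_M|\del X|^2\,dv$. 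Hence \eqref{mainthm3eqn} says $\int_M|\del X|^2\,dv\le 0$, forcing $\del X=0$; then $\lieX g=0$, $X$ is Killing, and $(M^n,g,X,\lambda,\rho)$ is a trivial RB soliton.

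I expect the only genuine obstacle to be the careful management of the three rational coefficients: one must check that $\tfrac{n\rho}{n\rho-1}$, $-2\rho$, and $-\tfrac{n(2\rho+1)-2}{n\rho-1}$ conspire so that the curvature and soliton-function derivatives recombine into $-\int_M(\Div X)^2\,dv$ and nothing survives but $\int_M|\del X|^2\,dv$; the decisive simplification is the factorization $n^2\rho-2n\rho-n+2=(n-2)(n\rho-1)$, which reduces the $\del_X\lambda$-coefficient to $n-2$. A secondary, routine point is fixing the curvature sign convention in the commutation identity underlying \eqref{plan-weitz}.
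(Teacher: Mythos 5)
Your proposal is correct: the Weitzenb\"ock formula follows from taking the divergence of \eqref{almostRBeqn}, using the contracted second Bianchi identity and the commutation $\del^i\del_jX_i=\del_j(\Div X)+\tRic(X)_j$ (the sign convention you need is exactly the one consistent with the paper's Lemma~\ref{lemma2.2}); your integrated Bochner identity is right; and the coefficient bookkeeping works as you predicted, the decisive point being the factorization $n^2\rho-2n\rho-n+2=(n-2)(n\rho-1)$, after which $n\,\del_X\lambda+(n\rho-1)\del_X R=\del_X(\Div X)$ integrates to $-\int_M(\Div X)^2\,dv$ and cancels, leaving $\int_M|\del X|^2\,dv$ equal to the integral in \eqref{mainthm3eqn}. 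The paper reaches the same key identity by a more economical packaging: it never forms the rough Laplacian of $X$, but instead integrates the pointwise identity \eqref{2.3.1} of Lemma~\ref{lemma2.3} over closed $M$, where the term $\frac{(1-n\rho)}{2}\Delta|X|^2$ drops out, and then divides by $1-n\rho$ (legitimate since $\rho\neq\frac1n$); Lemma~\ref{lemma2.3} is itself built from Petersen--Wylie's formula for $\Div(\lieX g)(X)$, which is precisely your Bochner identity plus the same commutation in disguise, and it performs your trace substitution $\del_X\Div X=n\,\del_X\lambda+(n\rho-1)\del_X R$ pointwise, before any integration, so that $\del_X\Div X$ is never integrated by parts and no $(\Div X)^2$ terms appear. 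What your route buys is a clean intermediate integral formula valid for every $\rho$, with the hypothesis $\rho\neq\frac1n$ entering only through the rational coefficients of \eqref{mainthm3eqn}, plus the satisfying internal check that the $(\Div X)^2$ contributions cancel; what the paper's route buys is brevity and a pointwise identity that it reuses elsewhere, e.g.\ for \eqref{2.3.2} and for Corollary~\ref{mainthm4} via Remark~\ref{lemma2.6}. One cosmetic remark: from $\del X\equiv 0$ you conclude $X$ is parallel, hence Killing, which is slightly stronger than needed; vanishing of $|\del X|^2$ already kills the symmetric part $\lieX g$, which is all the triviality conclusion requires.
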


\medskip

Since every RB almost soliton is also a RB soliton for constant $\lambda$, hence using $\del \lambda =0$, we get the following corollary for compact RB solitons.

\begin{corr}\label{mainthm4}
Let $(M^n,g,X,\lambda, \rho)$, $n\geq 3$, be a compact RB soliton. If $\rho \neq \frac 1n$ and
\begin{align}\label{mainthm4eqn}
\int_M [\tRic(X,X)+\frac{n\rho}{(n\rho-1)}\del_X \Div X-2\rho g(\del R, X)]dv\leq 0,
\end{align}
then $X$ is a Killing vector field and $M^n$ is a trivial RB soliton.
\end{corr}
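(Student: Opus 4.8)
The plan is to deduce this statement as an immediate specialization of \cref{mainthm3}, using the observation already recorded after the definition of an RB almost soliton: every RB soliton is in particular a gradient-free instance of the RB almost soliton equation \eqref{almostRBeqn} in which the soliton function $\lambda$ happens to be a \emph{constant}. Thus a compact RB soliton $(M^n,g,X,\lambda,\rho)$ with $n\geq 3$ and $\rho\neq\frac1n$ automatically satisfies all the structural hypotheses of \cref{mainthm3}, and the only thing that changes is the integrand.

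The key step is simply to exploit that $\lambda$ is constant. Since $\lambda$ is a real constant rather than a genuine function on $M$, its gradient vanishes identically, $\del\lambda=0$, and therefore $g(\del\lambda,X)=0$ at every point. Consequently the final term
\[
-\frac{(n(2\rho+1)-2)}{n\rho-1}\,g(\del\lambda,X)
\]
appearing in the integrand of \eqref{mainthm3eqn} is identically zero (note the coefficient is well defined precisely because $\rho\neq\frac1n$, so that $n\rho-1\neq0$). Hence the integral inequality \eqref{mainthm3eqn} of \cref{mainthm3} collapses exactly onto the integral inequality \eqref{mainthm4eqn} assumed here: the three surviving terms $\tRic(X,X)$, $\frac{n\rho}{n\rho-1}\del_X\Div X$, and $-2\rho\,g(\del R,X)$ coincide verbatim.

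With this identification in hand, the hypothesis \eqref{mainthm4eqn} is literally the hypothesis \eqref{mainthm3eqn} read under $\del\lambda=0$, so \cref{mainthm3} applies directly and yields that $X$ is a Killing vector field and that $M^n$ is a trivial RB soliton, which is the desired conclusion. I do not anticipate any genuine obstacle: the entire content of the corollary is the bookkeeping observation that constancy of $\lambda$ annihilates the $g(\del\lambda,X)$ term, and the hypothesis $\rho\neq\frac1n$ is exactly what is needed to keep the coefficients in \eqref{mainthm4eqn} finite so that \cref{mainthm3} can be invoked verbatim.
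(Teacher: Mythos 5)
Your proposal is correct and matches the paper's own route: the paper likewise obtains Corollary~\ref{mainthm4} by specializing the almost-soliton argument to constant $\lambda$ (i.e.\ $\del\lambda=0$, via the soliton identity \eqref{2.6.1} of Remark~\ref{lemma2.6}), which is exactly your observation that the $g(\del\lambda,X)$ term in \eqref{mainthm3eqn} vanishes and the hypothesis collapses to \eqref{mainthm4eqn}. Invoking Theorem~\ref{mainthm3} as a black box rather than re-running its proof is an immaterial difference.
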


\medskip

\begin{rem}
Corollary~\ref{mainthm4} is an analog of Theorem 1.1 in \cite{petersen-wylie}, which was for the case of compact Ricci solitons. We obtain Petersen-Wylie's result from our result by taking $\rho=0$. In fact, the condition in \eqref{mainthm4eqn} is analogous to the condition in \cite[Theorem 1.1]{petersen-wylie}, which is obtained when $\rho=0$ in \eqref{mainthm4eqn}.
\end{rem}

\medskip

Finally, we obtain integral formulas for compact gradient RB almost solitons, generalizing a corresponding result for compact gradient Ricci almost solitons from  \cite{barros1}.  
 
 \medskip

\begin{thm}\label{mainthm6}
Let $(M^n,g,\del f, \lambda, \rho)$ be a compact gradient RB almost soliton. Then

\begin{align}
\int_M  \left|\del^2f-\frac{\Delta f}{n}g\right|^2 dv &=\frac{(n-2)}{2n}\int_M g(\del R, \del f) dv  
\end{align}
and                   
\begin{align}
\int_M \left|\tRic -\frac Rn g\right|^2 dv & = \frac{(n-2)}{2n}\int_M g(\del R, \del f) dv  . 
\end{align}
\end{thm}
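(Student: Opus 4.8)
The plan is to reduce both identities to a single integration-by-parts computation by exploiting the trace-free decomposition of the soliton equation. Writing the gradient RB almost soliton equation \eqref{almostRBeqn} with $X=\del f$ as
\[
\tRic + \del^2 f = (\lambda + \rho R)\,g,
\]
the right-hand side is pure trace, so subtracting the trace part of both sides yields the pointwise identity
\[
\Big(\tRic - \tfrac{R}{n}g\Big) + \Big(\del^2 f - \tfrac{\Delta f}{n}g\Big) = 0 .
\]
In particular the two trace-free tensors are negatives of one another, so $|\tRic - \frac{R}{n}g|^2 = |\del^2 f - \frac{\Delta f}{n}g|^2$ at every point. This already shows the two displayed integrals coincide, so it suffices to establish either one; note also that the soliton function $\lambda$ and the parameter $\rho$ disappear at this stage, which explains their absence from the final formula.

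To evaluate $\int_M |\tRic - \frac{R}{n}g|^2\,dv$, I would use the identity above to replace one factor of the trace-free Ricci tensor by $-(\del^2 f - \frac{\Delta f}{n}g)$, so that, writing $\langle\cdot,\cdot\rangle$ for the induced inner product on symmetric $2$-tensors,
\[
\int_M \Big|\tRic - \tfrac{R}{n}g\Big|^2 dv = -\int_M \Big\langle \tRic - \tfrac{R}{n}g,\ \del^2 f - \tfrac{\Delta f}{n}g \Big\rangle dv = -\int_M \Big\langle \tRic - \tfrac{R}{n}g,\ \del^2 f\Big\rangle dv,
\]
where in the last step the trace term $\frac{\Delta f}{n}g$ contributes nothing because $\tRic - \frac{R}{n}g$ is trace-free. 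Integrating by parts on the compact manifold $M$ (so that boundary terms vanish) moves one covariant derivative off the Hessian and onto the Ricci term:
\[
-\int_M \Big\langle \tRic - \tfrac{R}{n}g,\ \del^2 f\Big\rangle dv = \int_M g\Big(\Div\big(\tRic - \tfrac{R}{n}g\big),\ \del f\Big) dv .
\]

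The remaining ingredient is the divergence of the trace-free Ricci tensor, which follows from the contracted second Bianchi identity $\Div \tRic = \frac12 \del R$:
\[
\Div\Big(\tRic - \tfrac{R}{n}g\Big) = \tfrac12 \del R - \tfrac1n \del R = \tfrac{n-2}{2n}\,\del R .
\]
Substituting this gives $\int_M |\tRic - \frac{R}{n}g|^2\,dv = \frac{n-2}{2n}\int_M g(\del R, \del f)\,dv$, which is the second stated formula, and the first then follows from the pointwise equality of norms established at the outset.

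I do not expect a serious obstacle: the argument is a short chain consisting of a trace decomposition, one integration by parts, and the Bianchi identity. The only points requiring care are the sign bookkeeping in the integration by parts and confirming that the trace-free property is correctly invoked to discard the $\frac{\Delta f}{n}g$ term; compactness is exactly what guarantees the boundary terms vanish. An alternative route through the Bochner formula for $\frac12\Delta|\del f|^2$ would also reach the conclusion, but it is longer because it forces one to track the $(\Delta f)^2$ and $\tRic(\del f,\del f)$ terms explicitly, whereas the trace-free approach makes $\lambda$ and $\rho$ cancel automatically.
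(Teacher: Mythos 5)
Your proposal is correct, and it takes a genuinely shorter route than the paper. The paper proves the Hessian formula first, by taking the divergence of the identity $\del_i\big[(1-2\rho(n-1))R+|\del f|^2-2(n-1)\lambda\big]=(2\rho R+2\lambda)\del_i f$ from Proposition~\ref{prop2.1}, running a Bochner-type commutation on $\Delta|\del f|^2$, and substituting the traced soliton equation and \eqref{prop2.1.2} until all the $\lambda$- and $\rho$-terms cancel; only after integrating the resulting pointwise identity does it deduce the Ricci formula from the trace-free relation $\tRic-\frac{R}{n}g=-\big(\del^2 f-\frac{\Delta f}{n}g\big)$. You invert this order: you start from that same trace-free relation (so $\lambda$ and $\rho$ disappear immediately), prove the Ricci formula directly by pairing $\tRic-\frac{R}{n}g$ against $-\del^2 f$, integrating by parts, and applying $\Div\big(\tRic-\frac{R}{n}g\big)=\frac{n-2}{2n}\del R$ from the contracted second Bianchi identity, and then read off the Hessian formula. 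Your signs and the discard of the pure-trace term are all correct. What the paper's longer computation buys is a \emph{pointwise} identity (its equation preceding the integration step, relating $\Delta R$, $|\del^2 f-\frac{\Delta f}{n}g|^2$, $R\Delta f$ and $\Delta\lambda$) that is strictly stronger than the integral statement and mirrors the Barros--Ribeiro computation for Ricci almost solitons that the theorem generalizes; what your argument buys is economy and transparency about why the final formula is independent of $\lambda$ and $\rho$.
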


\medskip

As an application of the previous theorem, we provide some conditions for a compact gradient RB almost soliton to be isometric to a Euclidean  sphere.

\begin{corr}\label{mainthm6corr}
A nontrivial compact gradient RB almost soliton $(M^n, g, \del f, \lambda, \rho)$, $n\geq 3$, is isometric to a Euclidean  sphere if any of the following assertions hold:
\begin{enumerate}
\item $M^n$ has constant scalar curvature.
\item $\int_M g(\del R, \del f) dv\leq 0$.
\item $M^n$ is a homogenous manifold.
\end{enumerate}
\end{corr}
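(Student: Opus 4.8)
The plan is to route each of the three hypotheses through the two integral identities of \cref{mainthm6}, whose common right-hand side is $\frac{(n-2)}{2n}\int_M g(\del R,\del f)\,dv$, and then to finish with an Obata-type rigidity argument. First I would record that in the gradient case the soliton equation \eqref{almostRBeqn} reads $\tRic+\del^2 f=(\lambda+\rho R)g$, so the two tensors measured by the formulas are exactly the traceless Hessian $\del^2 f-\frac{\Delta f}{n}g$ and the traceless Ricci tensor $\tRic-\frac Rn g$.

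Next I would show that each of (1)--(3) forces the common right-hand side to be nonpositive. For (1) constant scalar curvature gives $\del R=0$, so $\int_M g(\del R,\del f)\,dv=0$. For (3) a homogeneous manifold has constant scalar curvature, since $R$ is an isometry invariant and the isometry group acts transitively, reducing it to case (1). Case (2) is the hypothesis itself. Because the left-hand sides of both identities in \cref{mainthm6} are integrals of nonnegative quantities, each must vanish, yielding the pointwise conclusions
\[
\del^2 f=\tfrac{\Delta f}{n}\,g,\qquad \tRic=\tfrac Rn\,g,
\]
so that $(M,g)$ is Einstein and the Hessian of $f$ is pure trace. Since $n\ge3$, Schur's lemma then makes $R$ constant (this is automatic in cases (1) and (3), and is what case (2) supplies through Einstein).

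The heart of the argument is the rigidity step. Writing $\psi:=\frac{\Delta f}{n}$, so that $\del^2 f=\psi g$, I would differentiate once more and commute covariant derivatives via the Ricci identity; contracting and inserting the Einstein condition $R_{jk}=\frac Rn g_{jk}$ in the Bochner relation $\del^i\del_i\del_j f=\del_j\Delta f+R_{jk}\del^k f$ collapses everything to $(1-n)\del_j\psi=\frac Rn\del_j f$, hence $\del\psi=-\frac{R}{n(n-1)}\del f$. Integrating over the connected manifold gives $\psi=-\frac{R}{n(n-1)}f+C$, and replacing $f$ by $\td f:=f-\frac{Cn(n-1)}{R}$ produces the Obata equation
\[
\del^2 \td f=-\frac{R}{n(n-1)}\,\td f\,g.
\]
Because the soliton is nontrivial we have $\del^2 f\not\equiv0$, so $\td f$ is nonconstant; multiplying the trace $\Delta\td f=-\frac{R}{n-1}\td f$ by $\td f$ and integrating forces $R>0$. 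Obata's theorem then identifies $(M^n,g)$ with a round sphere, settling all three cases.

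The step I expect to be the main obstacle is the rigidity core: securing the affine dependence $\psi=-\frac{R}{n(n-1)}f+C$ and the strict sign $R>0$. Both rely on having the Einstein condition in hand to control the Bochner commutator and to let Schur freeze $R$ as a constant, and on checking that nontriviality genuinely rules out the degenerate flat or constant-$f$ situation; once the clean Obata equation is in place the isometry with the Euclidean sphere is immediate.
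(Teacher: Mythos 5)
Your proposal is correct, but it finishes differently from the paper. Both arguments begin identically: each of the three hypotheses forces the common right-hand side $\frac{n-2}{2n}\int_M g(\del R,\del f)\,dv$ of Theorem~\ref{mainthm6} to be nonpositive (for (3) via constancy of $R$ on a homogeneous space), whence the nonnegative integrands vanish and $\tRic=\frac Rn g$, $\del^2 f=\frac{\Delta f}{n}g$. At that point the paper simply observes that \eqref{gradRBsoliton} gives $\del_i\del_jf=(\lambda+R(\rho-\frac 1n))g_{ij}$, so $\del f$ is a nontrivial conformal vector field, and invokes Theorem~\ref{mainthm2} — whose proof rests on Yano's results on conformal vector fields — to get the sphere. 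You instead re-derive the rigidity from scratch: Schur's lemma freezes $R$, the Bochner commutation applied to $\del^2 f=\psi g$ yields $\del\psi=-\frac{R}{n(n-1)}\del f$, and after normalizing you obtain the Obata equation $\del^2\td f=-\frac{R}{n(n-1)}\td f\,g$ with $R>0$, finishing with Obata's theorem directly. Your route is longer but self-contained (it bypasses Theorem~\ref{mainthm2} and Yano's machinery entirely) and it explicitly produces the Obata function and the sign of $R$; the paper's route is two lines because it reuses a theorem already proved. One small point to tighten in your write-up: you divide by $R$ to define $\td f$ before you have shown $R\neq 0$, while your positivity argument for $R$ uses $\td f$ — so you should first dispose of the case $R=0$ separately (there $\del\psi=0$ makes $\psi$ constant, and $\int_M\Delta f\,dv=0$ on the compact $M$ forces $\psi=0$, i.e.\ $\del^2 f\equiv 0$, contradicting nontriviality). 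You flag this degenerate case but do not execute the check; with that one line added, your proof is complete.
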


\medskip 

The paper is organized as follows. In \cref {prelims}, we state and prove some identities for RB solitons and RB almost solitons which will be used to prove the main results. 
%The corresponding analogs for Ricci and Ricci almost solitons can be found, for example, in \cite{barros2} and \cite{barros1} respectively. 
In \cref{proofs}, we prove the main theorems and their corollaries.

\begin{ack}

The author would like to thank his advisor Spiro Karigiannis for his constant encouragement and advice. The author is also grateful to the anonymous referee for various useful remarks and suggestions which have improved the quality of the paper.
\end{ack}

\section{Preliminaries}\label{prelims}

In this section, we prove some general results about RB solitons and almost solitons. The proofs of some of these results in the compact gradient case can also be found in \cite{catino2} or \cite{catino3}. Let us first recall the Ricci identity for a $(0,2)$-tensor $\alpha$:
\begin{align*}
\del_i\del_j \alpha_{kl}-\del_j\del_i \alpha_{kl}&= -R_{ijkm}\alpha_{ml}-R_{ijlm}\alpha_{km},
\end{align*}
where $R_{ijkl}$ is the Riemann curvature tensor. The Ricci curvature is obtained from the Riemann curvature tensor by contracting on the first and last index
\begin{align*}
R_{ij}&=g^{kl}R_{kijl}
\end{align*} 
and the contracted second Bianchi identity is 
\begin{align*}
\del_iR_{ij}&=\frac 12 \del_jR.
\end{align*}

We start with the following 

\medskip
 
\begin{prop} \label{prop2.1}
Let $(M^n,g,\del f,\lambda, \rho)$ be a gradient RB almost soliton. Then the following identities hold

\begin{flalign}
(1-n\rho)R+\Delta f &=n\lambda. &&  \label{prop2.1.1} \\
(1 -2\rho (n-1))\del_iR &= 2R_{il}\del_lf+2(n-1)\del_i\lambda. && \label{prop2.1.2} \\
\del_jR_{ik}-\del_kR_{ij} &= R_{jkil}\del_lf+\rho (\del_jRg_{ik}-\del_kRg_{ij}) \nonumber \\
& \quad +(\del_j\lambda g_{ik}-\del_k \lambda g_{ij}). && \label{prop2.1.3} \\
\del_i \big[(1 -2\rho(n-1))R+|\del f|^2-2(n-1)\lambda \big]&= (2\rho R+2\lambda)\del_if. && \label{prop2.1.4}
\end{flalign}

\end{prop}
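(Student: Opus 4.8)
The plan is to read off all four identities from the single gradient equation. Writing $X=\del f$ in \eqref{almostRBeqn} and using $\tfrac12(\lieX g)_{ij}=\del_i\del_j f$, the defining relation becomes $R_{ij}+\del_i\del_j f=\lambda g_{ij}+\rho R g_{ij}$, and I will obtain everything by tracing and differentiating only this. For \eqref{prop2.1.1} I contract with $g^{ij}$: since $g^{ij}R_{ij}=R$, $g^{ij}\del_i\del_j f=\Delta f$ and $g^{ij}g_{ij}=n$, I get $R+\Delta f=n\lambda+n\rho R$, which rearranges to $(1-n\rho)R+\Delta f=n\lambda$.

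For \eqref{prop2.1.2} I take the divergence $\del^j$ of the gradient equation with $i$ free. The contracted second Bianchi identity gives $\del^j R_{ij}=\tfrac12\del_i R$, the commutation (Bochner) formula for a gradient, $\del^j\del_i\del_j f=\del_i\Delta f+R_{il}\del_l f$, handles the Hessian term, and the right-hand side contributes $\del_i\lambda+\rho\,\del_i R$. Eliminating $\del_i\Delta f$ with the gradient of \eqref{prop2.1.1} and collecting the coefficient of $\del_i R$ yields \eqref{prop2.1.2} after clearing a factor of $-2$.

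The delicate identity is \eqref{prop2.1.3}, which I expect to be the main obstacle. I differentiate the gradient equation in $\del_j$, writing it as $\del_j R_{ik}+\del_j\del_i\del_k f=\del_j\lambda\,g_{ik}+\rho\,\del_j R\,g_{ik}$, and subtract the same expression with $j$ and $k$ interchanged; the metric, $\lambda$, and $R$ terms immediately reproduce the two antisymmetrized brackets on the right of \eqref{prop2.1.3}. The real work is the third-derivative term $\del_j\del_i\del_k f-\del_k\del_i\del_j f$: applying the Ricci identity to the one-form $\del f$ to commute the outer two derivatives, together with symmetry of the Hessian, reduces it to $(R_{kijl}-R_{jikl})\del_l f$; the first (algebraic) Bianchi identity then gives $R_{kijl}-R_{jikl}=-R_{jkil}$, and transferring this term to the right-hand side of the subtracted equation leaves exactly the curvature term $R_{jkil}\del_l f$. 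Pinning down the signs and the index order in this last step is the part that requires genuine care.

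Finally, \eqref{prop2.1.4} is essentially algebraic once \eqref{prop2.1.2} is in hand. I rewrite \eqref{prop2.1.2} as $\del_i\big[(1-2\rho(n-1))R-2(n-1)\lambda\big]=2R_{il}\del_l f$, then compute $\del_i|\del f|^2=2(\del_i\del_l f)\del_l f$ and substitute $\del_i\del_l f=(\lambda+\rho R)g_{il}-R_{il}$ from the gradient equation, which turns $2R_{il}\del_l f$ into $(2\lambda+2\rho R)\del_i f-\del_i|\del f|^2$. Adding $\del_i|\del f|^2$ to both sides then collapses everything into $\del_i\big[(1-2\rho(n-1))R+|\del f|^2-2(n-1)\lambda\big]=(2\rho R+2\lambda)\del_i f$, which is \eqref{prop2.1.4}.
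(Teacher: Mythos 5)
Your proposal is correct and follows essentially the same route as the paper: tracing for \eqref{prop2.1.1}, divergence plus the contracted second Bianchi identity and the commutation $\del_j\del_i\del_jf=\del_i\Delta f+R_{il}\del_lf$ for \eqref{prop2.1.2}, antisymmetrized differentiation of the soliton equation for \eqref{prop2.1.3}, and substitution of the Hessian from the soliton equation back into \eqref{prop2.1.2} for \eqref{prop2.1.4}. The only cosmetic difference is in \eqref{prop2.1.3}, where you commute the outer derivatives first and then invoke the algebraic Bianchi identity via $R_{kijl}-R_{jikl}=-R_{jkil}$, whereas the paper uses Hessian symmetry first and a single Ricci-identity commutation; both yield the curvature term $R_{jkil}\del_lf$ with the correct sign.
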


\begin{proof}
For a gradient RB almost soliton we have
\begin{align}\label{gradRBsoliton}
R_{ij}+\del_i\del_jf&=\lambda g_{ij}+\rho Rg_{ij} .
\end{align}
Taking the trace of the above equation gives \eqref{prop2.1.1}. Next, taking the covariant derivative of \eqref{prop2.1.1} with respect to an orthonormal frame gives
\begin{align*}
(1-n\rho)\del_iR+\del_i\del_j\del_jf=n\del_i\lambda .
\end{align*}
Commuting the covariant derivatives and using the contracted second Bianchi identity, we obtain
\begin{align}
(1-n\rho)\del_iR &= -\del_j\del_i\del_jf+R_{il}\del_lf+n\del_i\lambda \nonumber \\
&= -\del_j(-R_{ij}+\lambda g_{ij}+\rho Rg_{ij})+R_{il}\del_lf +n\del_i\lambda\nonumber \\
&= \frac 12 \del_iR -\rho \del_iR-\del_i\lambda+R_{il}\del_l f+n\del_i\lambda \nonumber 
\end{align}
and hence
\begin{align}
(\frac 12 -\rho (n-1))\del_iR = R_{il}\del_lf+(n-1)\del_i\lambda,
\end{align}
which proves \eqref{prop2.1.2}.

\medskip

For proving \eqref{prop2.1.3}, we use \eqref{gradRBsoliton} and commute the covariant derivatives to get 
\begin{align}
\del_jR_{ik}-\del_kR_{ij}&=(\del_k\del_i\del_jf-\del_j\del_i\del_kf)+\rho (\del_jRg_{ik}-\del_kRg_{ij}) \nonumber \\
& \quad +(\del_j\lambda g_{ik}-\del_k \lambda g_{ij})\nonumber \\
&=(\del_k\del_j\del_if-\del_j\del_k\del_if)+\rho (\del_jRg_{ik}-\del_kRg_{ij}) \nonumber \\
& \quad +(\del_j\lambda g_{ik}-\del_k \lambda g_{ij}) \nonumber \\
&=R_{jkil}\del_lf+\rho (\del_jRg_{ik}-\del_kRg_{ij})+(\del_j\lambda g_{ik}-\del_k \lambda g_{ij}).
\end{align}

\medskip

Finally, from \eqref{prop2.1.2} we get
\begin{align}
(1 -2\rho(n-1))\del_iR&=2\del_lf(-\del_i\del_lf+\lambda g_{il}+\rho Rg_{il})+2(n-1)\del_i \lambda \nonumber \\
&=-2\del_lf\del_i\del_lf+2\lambda \del_if+2\rho R\del_if +2(n-1)\del_i \lambda\nonumber \\
&=-\del_i|\del_lf|^2+2\lambda \del_if+2\rho R\del_if+2(n-1)\del_i \lambda, \nonumber
\end{align}
so we get
\begin{align}
\del_i \big[(1 -2\rho(n-1))R+|\del f|^2-2(n-1)\lambda \big ]&= (2\rho R+2\lambda)\del_if,
\end{align}
which proves \eqref{prop2.1.4}.

\end{proof}

\begin{rem}\label{prop2.5}
The analogous identities for gradient RB solitons $(M^n,g, \del f, \lambda, \rho)$ are
\begin{flalign}
(1-n\rho)R+\Delta f& =n\lambda. && \label{2.5.1} \\
(1-2\rho (n-1))\del_iR &=2R_{il}\del_lf. && \label{2.5.2} \\
\del_jR_{ik}-\del_kR_{ij} & =R_{jkil}\del_lf+\rho (\del_jRg_{ik}-\del_kRg_{ij}). && \label{2.5.3} \\
\del_i\big [(1-2\rho(n-1))R+|\del f|^2-2\lambda f    \big ] & =2\rho R\del_if. && \label{2.5.4}
\end{flalign}
The proofs of these identities are special cases of the previous result as $\del \lambda =0$.

\end{rem}

\medskip

We recall the following lemma from \cite[Lemma 2.1]{petersen-wylie}.

\begin{lemma}
Let $X$ be a vector field on a Riemannian manifold $(M^n,g)$. Then
\begin{align}\label{lemma2.2}
\Div(\lieX g)(X) &= \frac 12 \Delta |X|^2-|\del X|^2 + \text{Ric} (X,X)+\del_X \Div X .
\end{align}
When $X=\del f$ and $Z$ is any vector field, then
\begin{align}
\Div (\mathcal{L}_{\del f}g)(Z)=2\text{Ric}(Z,\del f)+2\del_Z \Div \del f .
\end{align}
\end{lemma}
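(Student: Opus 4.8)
The plan is to compute the divergence of the Lie derivative in an orthonormal frame and reduce everything to a single commutator of covariant derivatives. Writing $(\lieX g)_{ij} = \del_i X_j + \del_j X_i$ and taking the divergence on the first slot gives
\[
\Div(\lieX g)_j = \del_i\del_i X_j + \del_i\del_j X_i = \Delta X_j + \del_i\del_j X_i,
\]
where $\Delta$ denotes the (rough) connection Laplacian. The only nontrivial term is $\del_i\del_j X_i$, which I would rewrite by commuting the two covariant derivatives.

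Next I would apply the Ricci identity for the vector field $X$, namely $\del_i\del_j X_k - \del_j\del_i X_k = -R_{ijkm}X_m$, set $k=i$, and sum. Using the paper's curvature convention $R_{ij}=g^{kl}R_{kijl}$ together with the antisymmetry of $R_{ijkl}$ in its last pair of indices, the contracted curvature term $-R_{ijim}$ collapses to $+R_{jm}$, so that
\[
\del_i\del_j X_i = \del_j\del_i X_i + R_{jm}X_m = \del_j(\Div X) + R_{jm}X_m.
\]
This produces both the Ricci term and the $\del_X\Div X$ term once we contract against $X$. Keeping the signs and index placement straight in this single contraction is the one place where an error could slip in, so it is the step I would verify most carefully; everything else is routine.

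Contracting $\Div(\lieX g)_j = \Delta X_j + \del_j(\Div X) + R_{jm}X_m$ with $X_j$ yields
\[
\Div(\lieX g)(X) = X_j\Delta X_j + \del_X(\Div X) + \tRic(X,X),
\]
and the first term is handled by the Bochner-type identity $\tfrac12\Delta|X|^2 = |\del X|^2 + X_j\Delta X_j$, obtained by differentiating $|X|^2 = X_jX_j$ twice. Substituting $X_j\Delta X_j = \tfrac12\Delta|X|^2 - |\del X|^2$ gives \eqref{lemma2.2}. For the gradient case $X=\del f$ the Hessian is symmetric, so $(\mathcal{L}_{\del f}g)_{ij} = 2\del_i\del_j f$ and the general computation simplifies: applying the same $k=i$ commutation to $\del f$ gives $\Delta(\del_j f) = \del_j(\Delta f) + R_{jm}\del_m f$, so each of the two terms doubles and $\Div(\mathcal{L}_{\del f}g)_j = 2\del_j(\Delta f) + 2R_{jm}\del_m f$. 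Pairing with an arbitrary vector field $Z$ and using $\Div\del f = \Delta f$ then produces the second identity.
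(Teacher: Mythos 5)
Your proof is correct. Note that the paper itself gives no proof of this lemma --- it is recalled verbatim from Petersen--Wylie \cite[Lemma 2.1]{petersen-wylie} --- so there is no internal argument to compare against; your computation is the standard one and is essentially the argument in that reference. The one delicate step, the contracted commutation, checks out against the paper's stated conventions: with the Ricci identity $\del_i\del_j X_k - \del_j\del_i X_k = -R_{ijkm}X_m$ and the contraction $R_{ij}=g^{kl}R_{kijl}$, antisymmetry in the last pair gives $-\sum_i R_{ijim}X_m = \sum_i R_{ijmi}X_m = R_{jm}X_m$, so the Ricci term indeed enters with a plus sign, and the identity $\tfrac 12 \Delta |X|^2 = |\del X|^2 + X_j\Delta X_j$ is applied correctly. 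The gradient case is also handled correctly: symmetry of the Hessian makes the two terms of $\Div(\mathcal{L}_{\del f}g)_j$ coincide, yielding the factor of $2$ in $2\tRic(Z,\del f)+2\del_Z \Div \del f$.
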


\vspace{0.3cm}

We use the preceding lemma to prove the following

\medskip

\begin{lemma}\label{lemma2.3}
Let $(M^n,g,X,\lambda, \rho)$ be a RB almost soliton. Then

\begin{align}\label{2.3.1}
\frac{(1-n\rho)}{2}\Delta |X|^2 &=(1-n\rho)|\del X|^2 +(n\rho -1)\tRic (X,X)+n\rho \del_X \Div X \nonumber \\
& \quad + 2\rho(1-n\rho)g(\del R,X)-(n(2\rho+1)-2)g(\del \lambda, X)
\end{align}
and
\begin{align}\label{2.3.2}
\frac{(1-n\rho)}{2}(\Delta-\del_X)|X|^2 & = (1-n\rho)|\del X|^2+\lambda (n\rho-1)|X|^2+\rho(n\rho-1)R|X|^2 \nonumber \\
& \quad +n\rho \del_X \Div X + 2\rho(1-n\rho)g(\del R,X) \nonumber \\
& \quad -(n(2\rho+1)-2)g(\del \lambda, X).
\end{align}

\end{lemma}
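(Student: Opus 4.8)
The plan is to combine the divergence of the almost-soliton equation with the Bochner-type identity \eqref{lemma2.2} supplied by the preceding lemma. First I would take the divergence of the defining equation \eqref{almostRBeqn}, written in an orthonormal frame as $R_{ij}+\frac12(\lieX g)_{ij}=\lambda g_{ij}+\rho R g_{ij}$. Applying $\del_i$ and invoking the contracted second Bianchi identity $\del_i R_{ij}=\frac12\del_j R$ gives
\begin{align*}
\Div(\lieX g)_j = 2\del_j\lambda + (2\rho-1)\del_j R ,
\end{align*}
so contracting with $X$ yields $\Div(\lieX g)(X)=2g(\del\lambda,X)+(2\rho-1)g(\del R,X)$. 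Equating this with the right-hand side of \eqref{lemma2.2} produces a first expression for $\frac12\Delta|X|^2$ in terms of $|\del X|^2$, $\tRic(X,X)$, $\del_X\Div X$, and the two gradient terms.

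Next I would extract a trace identity. Taking the trace of \eqref{almostRBeqn} and using $\tr(\frac12\lieX g)=\Div X$ gives the non-gradient analogue of \eqref{prop2.1.1}, namely $(1-n\rho)R+\Div X=n\lambda$. Differentiating this along $X$ produces
\begin{align*}
\del_X\Div X = n\,g(\del\lambda,X)+(n\rho-1)g(\del R,X) .
\end{align*}
I would then multiply the expression for $\frac12\Delta|X|^2$ through by $(1-n\rho)$ and use this last relation to convert the coefficient of $\del_X\Div X$ from $(n\rho-1)$ into $n\rho$, absorbing the leftover $-\del_X\Div X$ into the gradient terms. A short collection of coefficients then yields \eqref{2.3.1}; the only place requiring genuine care is checking that the $g(\del\lambda,X)$ coefficient collapses to $-(n(2\rho+1)-2)$ and the $g(\del R,X)$ coefficient to $2\rho(1-n\rho)$, so the bookkeeping is the main (and essentially only) obstacle.

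For \eqref{2.3.2} I would subtract $\frac{(1-n\rho)}{2}\del_X|X|^2$ from both sides of \eqref{2.3.1}. The key auxiliary identity comes from contracting \eqref{almostRBeqn} twice with $X$: since $(\lieX g)(X,X)=\del_X|X|^2$ (the same structural fact underlying \eqref{lemma2.2}), this gives
\begin{align*}
\tRic(X,X)=\lambda|X|^2+\rho R|X|^2-\tfrac12\del_X|X|^2 .
\end{align*}
Multiplying by $(n\rho-1)$ and substituting shows that $(n\rho-1)\tRic(X,X)-\frac{(1-n\rho)}{2}\del_X|X|^2$ equals $\lambda(n\rho-1)|X|^2+\rho(n\rho-1)R|X|^2$, which transforms \eqref{2.3.1} directly into \eqref{2.3.2}. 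Beyond the coefficient tracking already noted, I anticipate no further difficulty, since every ingredient — the divergence of the soliton equation, the trace identity, and the doubly-contracted equation — is elementary once \eqref{lemma2.2} is in hand.
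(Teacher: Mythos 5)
Your proposal is correct and uses essentially the same ingredients as the paper's proof: the divergence of the soliton equation combined with the contracted second Bianchi identity, the traced soliton equation differentiated along $X$, and the Petersen--Wylie identity \eqref{lemma2.2}, with \eqref{2.3.2} obtained exactly as in the paper by substituting the doubly $X$-contracted soliton equation for $\tRic(X,X)$. The only difference is the order in which the three identities are combined (the paper starts by expanding $\del_X \Div X$, you start from $\tfrac12\Delta|X|^2$ and adjust the $\del_X\Div X$ coefficient afterwards), which is purely a matter of bookkeeping; your coefficient checks for $g(\del\lambda,X)$ and $g(\del R,X)$ indeed come out right.
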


\medskip

\begin{proof}
We first notice that \eqref{almostRBeqn} gives
\begin{align}\label{divric}
2\Div \tRic + \Div (\lieX g) = 2\del \lambda+2\rho \del R.
\end{align}
Taking the trace of \eqref{almostRBeqn} gives $(1-n\rho)R + \Div X = n\lambda$ and thus
\begin{align}\label{delR}
(1-n\rho)\del_XR+\del_X(\Div X)=n\del_X \lambda.
\end{align}

\medskip

So, using \eqref{lemma2.2}, \eqref{divric}, \eqref{delR} and the contracted second Bianchi identity, we get

\begin{align}
\del_X(\Div X)&=(n\rho-1)\del_XR +ng(\del \lambda, X) \nonumber \\
&=2(n\rho-1)\Div \tRic(X) +ng(\del \lambda, X)\nonumber \\
&=-(n\rho-1)\Div (\lieX g)(X)+2\rho(n\rho -1)g(\del R, X) +2(n\rho-1)g(\del \lambda, X)\nonumber \\
& \quad +ng(\del \lambda, X) \nonumber \\
&=(1-n\rho )\Big ( \frac 12 \Delta |X|^2-|\del X|^2 + \tRic (X,X)+\del_X \Div X   \Big ) \nonumber \\
& \quad +2\rho (n\rho -1)g(\del R,X)+(n(2\rho+1)-2)g(\del \lambda, X) \nonumber \\
&= \frac{(1-n\rho)}{2} \Delta |X|^2-(1-n\rho)|\del X|^2+(1-n\rho)\tRic (X,X) \nonumber \\
& \quad +(1-n\rho) \del_X \Div X + 2\rho(n\rho-1)g(\del R,X)+(n(2\rho+1)-2)g(\del \lambda, X), \nonumber
\end{align}
which gives

\medskip

\begin{align}
\frac{(1-n\rho)}{2}\Delta |X|^2 &=(1-n\rho)|\del X|^2 +(n\rho -1)\tRic (X,X)+n\rho \del_X \Div X \nonumber \\
& \quad + 2\rho(1-n\rho)g(\del R,X)-(n(2\rho+1)-2)g(\del \lambda, X),
\end{align}
thus proving \eqref{2.3.1}.

\medskip

Using \eqref{almostRBeqn} to write $\tRic(X,X)=-\frac 12 (\lieX g)(X,X)+\lambda |X|^2+\rho R|X|^2$ in \eqref{2.3.1}, we get

\begin{align}
\frac{(1-n\rho)}{2}\Delta |X|^2 &=(1-n\rho)|\del X|^2 +(n\rho -1)\Big(-\frac 12 (\lieX g)(X,X)+\lambda |X|^2+\rho R|X|^2 \Big) \nonumber \\
& \quad +n\rho \del_X \Div X + 2\rho(1-n\rho)g(\del R,X)-(n(2\rho+1)-2)g(\del \lambda, X) \nonumber \\
&= (1-n\rho)|\del X|^2 +\frac{(1-n\rho)}{2}\del_X|X|^2+\lambda (n\rho-1)|X|^2+\rho(n\rho-1)R|X|^2 \nonumber \\
& \quad +n\rho \del_X \Div X + 2\rho(1-n\rho)g(\del R,X) -(n(2\rho+1)-2)g(\del \lambda, X), \nonumber 
\end{align}
which gives
\begin{align}
\frac{(1-n\rho)}{2}(\Delta-\del_X)|X|^2 & = (1-n\rho)|\del X|^2+\lambda (n\rho-1)|X|^2+\rho(n\rho-1)R|X|^2 \nonumber \\
& \quad +n\rho \del_X \Div X + 2\rho(1-n\rho)g(\del R,X) \nonumber \\
& \quad -(n(2\rho+1)-2)g(\del \lambda, X),
\end{align}
proving \eqref{2.3.2}.
\end{proof}

\medskip

If we consider the diffusion operator $\Delta_X=\Delta-\del_X$, then from the previous lemma with $X=\del f$ and $\Delta_f=\Delta-\del_{\del f}$, we obtain the following corollary.

\begin{corr}
For a gradient RB almost soliton $(M^n, g, \del f, \lambda, \rho)$, we have
\begin{align}
\frac{(1-n\rho)}{2}\Delta_f|\del f|^2 &= (1-n\rho)|\del^2f|^2+\lambda(n\rho-1)|\del f|^2+\rho(n\rho-1)R|\del f|^2 \nonumber \\
& \quad +n\rho \del_{\del f}(\Delta f)+2\rho (1-n\rho)g(\del R, \del f) \nonumber \\
& \quad -(n(2\rho +1)-2)g(\del \lambda, \del f) .
\end{align}
\end{corr}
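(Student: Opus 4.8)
The plan is to obtain this corollary as a direct specialization of Lemma~\ref{lemma2.3}: I would apply equation \eqref{2.3.2} to the particular vector field $X=\del f$ and then rewrite each term under this substitution using the standard identifications that hold for a gradient field. No new argument is needed beyond Lemma~\ref{lemma2.3}; the entire content is the translation of its conclusion into gradient form.

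Concretely, first I would note that the covariant derivative of the vector field $X=\del f$ is precisely the Hessian, so $(\del X)_{ij}=\del_i\del_j f$ and hence $|\del X|^2=|\del^2 f|^2$. Next, the divergence of a gradient is the Laplacian, $\Div X=\Div(\del f)=\Delta f$, so that $\del_X\Div X=\del_{\del f}(\Delta f)$. The remaining quantities need no adjustment: $|X|^2=|\del f|^2$, while $g(\del R,X)=g(\del R,\del f)$ and $g(\del\lambda,X)=g(\del\lambda,\del f)$. Finally, by the very definition of the diffusion operator, $\Delta-\del_X=\Delta-\del_{\del f}=\Delta_f$ when acting on $|X|^2=|\del f|^2$, which accounts for the left-hand side.

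Substituting these expressions into \eqref{2.3.2} reproduces the claimed identity term by term, matching the coefficients $(1-n\rho)$, $\lambda(n\rho-1)$, $\rho(n\rho-1)R$, $n\rho$, $2\rho(1-n\rho)$ and $-(n(2\rho+1)-2)$ exactly. Since the statement is a pure substitution rather than a fresh estimate, there is no genuine obstacle here: the only point requiring any care is the (routine) recognition that $\del X=\del^2 f$ and $\Div X=\Delta f$ for a gradient vector field. This gradient form is precisely the version that will feed into the integral formulas of Theorem~\ref{mainthm6}.
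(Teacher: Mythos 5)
Your proposal is correct and is exactly the paper's own argument: the corollary is stated as an immediate specialization of \eqref{2.3.2} in Lemma~\ref{lemma2.3} to $X=\del f$, using $\del X=\del^2 f$, $\Div(\del f)=\Delta f$, and $\Delta_f=\Delta-\del_{\del f}$, with all coefficients carried over unchanged. Nothing further is needed.
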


\medskip

\begin{rem}\label{lemma2.6}
The analogs of \eqref{2.3.1} and \eqref{2.3.2} for a RB soliton $(M^n,g,X,\lambda, \rho)$ are
\begin{align}\label{2.6.1}
\frac{(1-n\rho)}{2}\Delta |X|^2 &=(1-n\rho)|\del X|^2 +(n\rho -1)\tRic (X,X)+n\rho \del_X \Div X \nonumber \\
& \quad + 2\rho(1-n\rho)g(\del R,X)
\end{align}
and
\begin{align}\label{2.6.2}
\frac{(1-n\rho)}{2}(\Delta-\del_X)|X|^2 & = (1-n\rho)|\del X|^2+\lambda (n\rho-1)|X|^2+\rho(n\rho-1)R|X|^2 \nonumber \\
& \quad +n\rho \del_X \Div X + 2\rho(1-n\rho)g(\del R,X) . \nonumber \\
\end{align}
The proofs are special cases of the proof of Lemma \ref{lemma2.3} with $\del \lambda =0$.
\end{rem}

\medskip

\section{Proofs of the Results}\label{proofs}

We start this section by proving the following lemma which will be used in the proofs of Theorem~\ref{mainthm1} and Theorem~\ref{mainthm2}.

\begin{lemma}\label{lemma3.1}
Let $(M^n,g,X,\lambda, \rho)$, $n\geq 3$, be a RB almost soliton. If $X$ is a nontrivial conformal vector field with $\lieX g=2\psi g$, then $R$ and $\lambda-\psi$ are constant.
\end{lemma}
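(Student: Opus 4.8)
The plan is to exploit the conformal hypothesis to collapse the almost-soliton equation into an Einstein-type equation, and then to invoke a Schur-type argument using the contracted second Bianchi identity recalled in \cref{prelims}. First I would substitute $\lieX g = 2\psi g$ directly into the defining equation \eqref{almostRBeqn}, which gives $\tRic + \psi g = \lambda g + \rho R g$, i.e.
\[
\tRic = (\lambda - \psi + \rho R)\, g .
\]
Thus the Ricci tensor is pointwise proportional to $g$ with proportionality \emph{function} $\mu := \lambda - \psi + \rho R$. Taking the trace yields $R = n\mu$, so that $\mu = R/n$.

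The second step is the standard Schur computation. Taking the divergence of $R_{ij} = \mu g_{ij}$ gives $\del_i R_{ij} = \del_j \mu$, while the contracted second Bianchi identity gives $\del_i R_{ij} = \tfrac12 \del_j R$. Combining these with $R = n\mu$ produces $\del_j \mu = \tfrac{n}{2}\del_j \mu$, hence $(n-2)\,\del_j \mu = 0$. Since $n \geq 3$ this forces $\mu$ to be constant, and therefore $R = n\mu$ is constant as well. Finally, constancy of both $\mu$ and $R$ shows that $\lambda - \psi = \mu - \rho R$ is constant, which is the desired conclusion.

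I do not expect a serious obstacle here: once the conformal condition is inserted, the argument is a direct application of Schur's lemma. The one delicate point is precisely the dimension hypothesis $n \geq 3$, which is exactly where the factor $(n-2)$ is nonzero and the Schur step goes through; in dimension two the conclusion would fail. I would also remark that the nontriviality of $X$ (that is, $\psi \not\equiv 0$) is not actually needed for this particular conclusion and appears only because it is the standing hypothesis under which the lemma is later applied in the proofs of \cref{mainthm1} and \cref{mainthm2}.
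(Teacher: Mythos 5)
Your proof is correct and is essentially the same argument as the paper's: both substitute $\lieX g = 2\psi g$ into \eqref{almostRBeqn} to obtain $\tRic = (\lambda-\psi+\rho R)g$ and then play the contracted second Bianchi identity against the traced equation to force $\del R = 0$ when $n\geq 3$; your packaging via $\mu = R/n$ is just a tidier bookkeeping of the paper's comparison $(1-n\rho)\del_j R = n(\tfrac12-\rho)\del_j R$. Your closing observation that $\psi\not\equiv 0$ is not actually used is also accurate, as the paper's own proof never invokes it either.
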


\begin{proof}
The soliton equation is 
\begin{align}\label{almostconf1}
R_{ij}+\frac 12(\lieX g)_{ij}=\lambda g_{ij}+\rho Rg_{ij},
\end{align}
where $\lambda:M\rightarrow \mathbb{R}$ is a function. If $X$ is a nontrivial conformal vector field, then we have
\begin{equation}
\lieX g=2\psi g,
\end{equation}
for some function $\psi:M\rightarrow \mathbb{R}$, $\psi\neq 0$. So \eqref{almostconf1} becomes
\begin{align}\label{almostconf2}
R_{ij}=(\lambda-\psi+\rho R)g_{ij}.
\end{align}
Taking the divergence of \eqref{almostconf2}, we get
\begin{align*}
\del_iR_{ij}&=\del_i(\lambda-\psi+\rho R)g_{ij},
\end{align*}
which implies
\begin{align}
(\frac 12 -\rho)\del_jR &= \del_j (\lambda-\psi). \label{almostconf3}
\end{align}

\medskip

On the other hand, tracing \eqref{almostconf2} and taking the covariant derivative, we get
\begin{align}\label{almostconf4}
(1-n\rho)\del_jR &=n\del_j(\lambda-\rho).
\end{align}

\medskip

So from \eqref{almostconf3} and \eqref{almostconf4}, we get
\begin{align}
(1-n\rho)\del_jR&=n(\frac 12-\rho)\del_jR .
\end{align}
Thus, if $M$ is connected, then $R$ is constant and hence $\lambda-\psi$ is constant.
\end{proof}

\begin{rem}\label{rem3.2}
If $(M^n,g,X,\lambda, \rho)$, $n\geq 3$, is a RB soliton and $X$ is a conformal vector field with $\lieX g=2\psi g$ for some function $\psi:M\rightarrow \mathbb{R}$,  then the proof of Lemma~\ref{lemma3.1} shows that $R$ and $\psi$ are constant as in this case $\del \lambda=0$.
\end{rem}

We prove Theorem~\ref{mainthm1} which we restate here.
\begin{thm}
Let $(M^n,g,X,\lambda, \rho)$, $n\geq 3$, be a RB soliton and suppose that the vector field $X$ is a conformal vector field.
\begin{enumerate}
\item If $M$ is compact, then $X$ is a Killing vector field and hence $(M^n,g,X,\lambda, \rho)$ is a trivial RB soliton.
%\item If $M$ is complete noncompact and a gradient RB soliton, then either $X$ is a Killing vector field or $(M^n,g,X,\lambda, \rho)$ is isometric to the Euclidean space.
\item There is no nontrivial, complete noncompact RB soliton $(M^n, g, X, \lambda, \rho)$ with conformal vector field $X$.
\end{enumerate}
\end{thm}

\begin{proof}
Suppose $X$ is a conformal vector field with potential $\psi:M \rightarrow \mathbb{R}$, i.e., 
\begin{align}\label{Xpsi}
\lieX g&=2\psi g,
\end{align}
then from Remark~\ref{rem3.2} we know that $R$ and $\psi$ are constant.

Taking the trace of \eqref{Xpsi}, we get
\begin{equation*}
2\Div X = 2n\psi, 
\end{equation*}
which upon integration over compact $M$ gives
\begin{equation}
0=\int_M 2\Div X dv=2n\Vol(M)\psi,
\end{equation}
which implies that $\psi =0$. So $X$ is a Killing vector field and hence $(M^n,X,g,\lambda, \rho)$ is a trivial RB soliton.

\medskip

If $M$ is noncompact and a gradient RB soliton with $X=\del f$, then $X$ being conformal implies
\begin{align*}
\del_i\del_jf=\psi g_{ij}
\end{align*}
and by Remark~\ref{rem3.2}, $\psi$ is constant. If $\psi =0$, then $X$ is a Killing vector field and $M$ is a trivial RB soliton. If $\psi\neq 0$, then from \cite[Theorem 2]{tashiro}, we conclude that $M^n$ is isometric to the Euclidean space. 

\end{proof}

Next we prove Corollary \ref{mainthm1corr}.

\begin{proof}
Since $M^n$ is compact, we know from Theorem \ref{mainthm1} that $X$ is a Killing vector field and hence we have $\tRic=(\lambda+\rho R)g$. So we can apply a classical theorem due to Lichnerowicz \cite{Lich}, which states that if $\tRic\geq k$, where $k>0$ is a constant, then the first eigenvalue of the Laplacian $\lambda_1$ satisfies $\lambda_1\geq \frac{n}{n-1}k$. So we get
\begin{align*}
\lambda_1\geq (\lambda+\rho R)\frac{n}{n-1}   .            
\end{align*}

Moreover, for the equality case, we can apply Obata's theorem \cite{obata}, to conclude that equality occurs in the above inequality if and only if $M^n$ is isometric to a Euclidean sphere of constant curvature $\frac{(\lambda+\rho R)}{n-1}$.
\end{proof}

\medskip

We now prove Theorem~\ref{mainthm2}, which we restate here. 

\begin{thm}
Let $(M^n,g,X,\lambda, \rho)$, $n\geq 3$, be a compact RB almost soliton. If $X$ is a nontrivial conformal vector field, then $M^n$ is isometric to a Euclidean  sphere.
\end{thm}

\begin{proof}
Suppose $X$ is a nontrivial conformal vector field with potential function $\psi:M\rightarrow \mathbb{R}$, i.e.,
\begin{align*}
\lieX g=2\psi g,
\end{align*}
with $\psi\neq 0$. Since $(M^n,g,X,\lambda, \rho)$ is a compact RB almost soliton with $n\geq 3$, Lemma~\ref{lemma3.1} tells us that $R$ and $\lambda-\psi$ are constant. So from Lemma 2.3 in \cite[pg.52]{yano}, we conclude that $R\neq 0$ or else $\psi$ would be 0. Taking the Lie derivative of \eqref{almostconf2}, we get
\begin{align*}
\lieX \tRic &= \lieX (\lambda-\psi+\rho R)g 
\end{align*}
and since $(\lambda-\psi),\ \rho$ and $R$ are all constant, so we get
\begin{align}\label{almostconf5}
\lieX \tRic &=2(\lambda-\psi+\rho R)\psi g  . 
\end{align}

Now we can apply Theorem 4.2 of \cite[pg. 54]{yano}, to conclude that $M$ is isometric to a Euclidean  sphere.
\end{proof}

\medskip

We proceed to the proof of Theorem \ref{mainthm3}.

\begin{proof}
We see from \eqref{2.3.1} of Lemma \ref{lemma2.3} that 
\begin{align*}
\frac{(1-n\rho)}{2}\Delta |X|^2 &=(1-n\rho)|\del X|^2 +(n\rho -1)\tRic (X,X)+n\rho \del_X \Div X \nonumber \\
& \quad + 2\rho(1-n\rho)g(\del R,X)-(n(2\rho+1)-2)g(\del \lambda, X).
\end{align*}

Integrating above over compact $M$, we get
\begin{align}
0&=\int_M [(1-n\rho)|\del X|^2+ (n\rho-1)\tRic (X,X)+n\rho \del_X\Div X \nonumber \\
& \qquad + 2\rho(1-n\rho)g(\del R,X)-(n(2\rho+1)-2)g(\del \lambda, X)] dv   .
\end{align}

Since $\rho \neq \frac 1n$, we get
\begin{align}
\int_M |\del X|^2 dv &= \int_M [\tRic(X,X)+\frac{n\rho}{n\rho-1}\del_X \Div X-2\rho g(\del R, X) \nonumber \\
& \qquad -\frac{(n(2\rho+1)-2)}{n\rho-1}g(\del \lambda, X)]dv,
\end{align}
so if \eqref{mainthm3eqn} holds, then $|\del X|^2=0$ and hence $X$ is a Killing vector field. Thus, $(M^n, g, X, \lambda, \rho)$ is trivial.
\end{proof}

\medskip

The proof of Corollary~\ref{mainthm4} is a special case of the proof of Theorem~\ref{mainthm3}, where we use \eqref{2.6.1} of Remark~\ref{lemma2.6}. 

\medskip

Next, we prove Theorem \ref{mainthm6} which we restate here.

\begin{thm}
Let $(M^n,g,\del f, \lambda, \rho)$ be a compact gradient RB almost soliton. Then

\begin{align}\label{mainthm6.1}
\int_M  \left|\del^2f-\frac{\Delta f}{n}g\right|^2 dv &=\frac{(n-2)}{2n}\int_M g(\del R, \del f) dv
\end{align}
and                    
\begin{align}\label{mainthm6.2}
\int_M \left|\tRic -\frac Rn g\right |^2 dv & = \frac{(n-2)}{2n}\int_M g(\del R, \del f) dv.
\end{align}
\end{thm}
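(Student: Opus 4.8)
The plan is to notice first that, for a gradient RB almost soliton, the trace-free Hessian of $f$ and the trace-free Ricci tensor coincide up to a sign, which collapses the two stated identities into a single one. Indeed, the soliton equation \eqref{gradRBsoliton} reads $R_{ij}+\del_i\del_j f=(\lambda+\rho R)g_{ij}$, and the trace identity \eqref{prop2.1.1} gives $\Delta f=n\lambda-(1-n\rho)R$, so that $\frac{\Delta f}{n}=\lambda+\rho R-\frac Rn$. Substituting $\del^2 f=(\lambda+\rho R)g-\tRic$ then yields
\[
\del^2 f-\frac{\Delta f}{n}g=\Big(\frac Rn g-\tRic\Big)=-\Big(\tRic-\frac Rn g\Big),
\]
so that $|\del^2 f-\frac{\Delta f}{n}g|^2=|\tRic-\frac Rn g|^2$ pointwise, and \eqref{mainthm6.1} and \eqref{mainthm6.2} are equivalent. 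It therefore suffices to establish \eqref{mainthm6.2}.

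Next I would evaluate $\int_M|\tRic-\frac Rn g|^2\,dv$ by integration by parts. Since $\tRic-\frac Rn g$ is trace-free, its pairing with $g$ vanishes, so the displayed identity lets me rewrite the integrand as $|\tRic-\frac Rn g|^2=-\langle\tRic-\frac Rn g,\del^2 f\rangle=-(R_{ij}-\frac Rn g_{ij})\del_i\del_j f$. Integrating over the closed manifold $M$ and moving one derivative off the Hessian (no boundary terms arise) gives
\[
\int_M\Big|\tRic-\frac Rn g\Big|^2\,dv=\int_M\del_i\Big(R_{ij}-\frac Rn g_{ij}\Big)\,\del_j f\,dv=\int_M\Div\Big(\tRic-\frac Rn g\Big)(\del f)\,dv.
\]

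The final step is to compute the divergence of the trace-free Ricci tensor via the contracted second Bianchi identity $\del_i R_{ij}=\frac12\del_j R$, which gives $\del_i(R_{ij}-\frac Rn g_{ij})=(\frac12-\frac1n)\del_j R=\frac{n-2}{2n}\del_j R$. Feeding this into the integral above produces exactly $\frac{n-2}{2n}\int_M g(\del R,\del f)\,dv$, establishing \eqref{mainthm6.2} and hence \eqref{mainthm6.1}. The argument is short and essentially algebraic; there is no genuine analytic difficulty, and the only points requiring a little care are the sign bookkeeping in rewriting $|\tRic-\frac Rn g|^2$ as a pairing with the Hessian and in the integration by parts, with compactness entering solely to discard the divergence term.
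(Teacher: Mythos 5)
Your proposal is correct, and it takes a genuinely different, shorter route than the paper. The paper proves \eqref{mainthm6.1} first: it takes the divergence of \eqref{prop2.1.4}, expands $\Delta|\del f|^2$ by commuting covariant derivatives, substitutes the differentiated trace identity, and arrives at the pointwise identity \eqref{thm6.3}, which it then integrates over $M$ (the $\Delta R$ and $\Delta\lambda$ terms integrate to zero, and $\int_M R\,\Delta f\, dv$ is handled by parts); only at the very end does it observe the algebraic relation $\tRic-\frac Rn g=-(\del^2 f-\frac{\Delta f}{n}g)$, from which \eqref{mainthm6.2} follows. You run this in the opposite order: you establish that algebraic relation first, so that the two identities collapse into one, and then prove \eqref{mainthm6.2} directly by writing $|\tRic-\frac Rn g|^2=-\langle\tRic-\frac Rn g,\del^2 f\rangle$ (legitimate, since the pairing of a trace-free tensor with $g$ vanishes pointwise), integrating by parts once on the closed manifold, and applying the contracted second Bianchi identity to get $\Div(\tRic-\frac Rn g)=(\frac 12-\frac 1n)\del R=\frac{n-2}{2n}\del R$. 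All signs and coefficients check out, including the sign in the integration by parts. What your argument buys is economy and transparency: it needs only the soliton equation \eqref{gradRBsoliton}, trace-freeness, the Bianchi identity and the divergence theorem, bypassing Proposition \ref{prop2.1} and the Bochner-type manipulation entirely. What the paper's longer computation buys is the local identity \eqref{thm6.3}, which holds without any compactness assumption and is potentially of independent use; your argument is intrinsically integral, with compactness entering exactly once, at the integration by parts.
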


\begin{proof}

For proving \eqref{mainthm6.1}, we take the divergence of \eqref{prop2.1.4} of Proposition \ref{prop2.1} to get 
\begin{align}\label{thm6.1}
(1-2\rho(n-1))\Delta R + \Delta |\del f|^2-2(n-1)\Delta \lambda &=2\rho g(\del R, \del f) + 2g(\del \lambda, \del f) \nonumber \\
& \quad + (2\rho R+2\lambda)\Delta f     .
\end{align}

By commuting the covariant derivatives, we have
\begin{align*}
\del_i\del_i(g(\del_jf,\del_jf))&=2\del_i(g(\del_i\del_jf, \del_jf)) \\
&=2g(\del_i\del_i\del_jf, \del_jf)+2|\del^2f|^2\\
&=2g(\del_j\del_i\del_if-R_{ijil}\del_lf, \del_jf)+2|\del^2f|^2\\
&= 2g(\del(\Delta f), \del f)+2\tRic(\del f, \del f)+2|\del^2f|^2,
\end{align*}
so \eqref{thm6.1} becomes

\begin{equation*}
 (1-2\rho(n-1))\Delta R+2g(\del(\Delta f), \del f)+2\tRic(\del f, \del f)+2|\del^2f|^2-2(n-1)\Delta \lambda=
\end{equation*}
\begin{align}\label{mainthm5.2}
\qquad \qquad 2\rho g(\del R, \del f)+ 2g(\del \lambda, \del f) + (2\rho R+2\lambda)\Delta f  .
\end{align}

From \eqref{prop2.1.1} of Proposition~\ref{prop2.1.1}, we know that $\Delta f=n\lambda + (n\rho-1)R$, which on differentiation and using \eqref{gradRBsoliton} becomes

\begin{align*}
0&=\del_i \Delta f+(1-n\rho)\del_i R-n\del_i \lambda & \\
&=(1-n\rho)\del_iR+\del_j\del_i\del_jf-R_{il}\del_lf-n \del_i \lambda \\
&=(1-n\rho)\del_iR+\del_j(-R_{ij}+\lambda g_{ij}+\rho Rg_{ij})-R_{il}\del_lf-n\del_i \lambda\\
&=(\frac 12-\rho(n-1))\del_iR-R_{il}\del_lf+(1-n)\del_i \lambda
\end{align*}
and hence

\begin{align}\label{thm6.2}
2\tRic(\del f, \del f)&=(1-2\rho(n-1))g(\del R, \del f)+2(1-n)g(\del \lambda, \del f)  .
\end{align}

So, using \eqref{thm6.2} and $\Delta f=n\lambda+(n\rho-1)R$, the left hand side of \eqref{mainthm5.2} becomes

\begin{align*}
(1-2\rho(n-1))\Delta R+2|\del^2f|^2-2(n-1)\Delta \lambda+2g(\del \lambda, \del f)+(2\rho -1)g(\del R, \del f)
\end{align*}
and hence \eqref{mainthm5.2} becomes

\begin{align}\label{mainthm6.2.}
(1-2\rho(n-1))\Delta R+2|\del^2f|^2-2(n-1)\Delta \lambda &= g(\del R, \del f)+(2\rho R+2\lambda) \Delta f  .
\end{align}

Since $|\del^2f-\frac{\Delta f}{n}g|^2 = |\del^2f|^2-\frac{(\Delta f)^2}{n}$, \eqref{mainthm6.2.} becomes

\begin{align}\label{thm6.3}
(1-2\rho(n-1))\Delta R+2\left|\del^2f-\frac{\Delta f}{n}g \right|^2&=g(\del R, \del f)+(2\rho R+2\lambda) \Delta f-2\frac{(\Delta f)^2}{n} \nonumber \\
& \quad +2(n-1)\Delta \lambda \nonumber \\
&=g(\del R, \del f)+(2\rho R+2\lambda) \Delta f \nonumber \\
& \quad -2\frac{(\Delta f)}{n}(n\lambda + (n\rho-1)R)+2(n-1)\Delta \lambda \nonumber \\
&= g(\del R, \del f)+\frac 2n R\Delta f+2(n-1)\Delta \lambda  .
\end{align}
Integrating \eqref{thm6.3} over compact $M$, we obtain
\begin{align}\label{mainthm5.4}
\int_M  2\left|\del^2f-\frac{\Delta f}{n}g\right |^2 dv &=\int_M \Big[ g(\del R, \del f)+\frac 2n R\Delta f \Big ] dv \nonumber \\
&=\frac{(n-2)}{n}\int_M g(\del R, \del f) dv,
\end{align}
where we have used integration by parts in the first equality to get the second equality. This proves \eqref{mainthm6.1}. 

\medskip

For proving \eqref{mainthm6.2}, note that 
\begin{align}\label{mainthm5.5}
\tRic -\frac Rn g&=-\del^2f +\lambda g+\rho Rg-\frac Rn g \nonumber \\
&=-\del^2f+(\lambda+\rho R-\frac Rn)g \nonumber \\
&=-\del^2f+\frac{\Delta f}{n} g
\end{align}
and then \eqref{mainthm6.2} follows from \eqref{mainthm6.1}.
\end{proof}

\medskip

\begin{rem}
Since a gradient RB soliton is a special case of a gradient RB almost soliton, the proof of Theorem~\ref{mainthm6}, with $\del \lambda=0$, shows that the same integral formulas \eqref{mainthm6.1} and \eqref{mainthm6.2} hold for a compact gradient RB solitons as well.
\end{rem}

\medskip

Finally, using Theorem~\ref{mainthm6}, we prove  Corollary~\ref{mainthm6corr}.

\begin{proof}
Observe that any of the assertions of  Corollary~\ref{mainthm6corr} enable us to conclude that the right hand side of \eqref{mainthm6.2} is less than or equal to zero and hence $\tRic=\frac Rn g$. So, from \eqref{gradRBsoliton}, we see that 
\begin{align*}
\del_i\del_jf&=(\lambda +R(\rho-\frac 1n))g
\end{align*}
and hence $\del f$ is a nontrivial conformal vector field, so from Theorem \ref{mainthm2}, we get that $M^n$ is isometric to a Euclidean  sphere.
\end{proof}

\vspace{0.5cm}

\bibliographystyle{amsalpha}
\bibliography{RB}
\end{document}